\numberwithin{equation}{section}
\font\script=rsfs10 at 11pt
\def\H{{\mbox{\script H}\,\,}}
\def\L{{\mbox{\script L}\,\,}}
\def\N{\mathbb N}
\def\R{\mathbb R}
\def\S{\mathbb S}
\def\C{\mathcal C}
\def\BigC{{\mbox{\script C}\,}}
\def\eps{\varepsilon}
\def\res{\mathop{\hbox{\vrule height 7pt width .5pt depth 0pt \vrule height .5pt width 6pt depth 0pt}}\nolimits}
\def\step#1#2{\par\noindent{\underline{\it Step~#1.}}\emph{ #2}\\}
\def\spt{\operatorname{spt}}
\def\Chi#1{\hbox{\Large$\chi$}_{#1}}
\def\freccia#1{\xrightarrow[\ #1]{}}
\def\proofof#1{\begin{proof}[Proof of~#1]}
\newtheorem{theorem}{Theorem}[section]
\newtheorem{lemma}[theorem]{Lemma}
\newtheorem{definition}[theorem]{Definition}
\newcounter{mt}
\def\maintheorem#1#2{\par \medskip \noindent {\bf Theorem~\mref{#1}}.~{\it #2}\par}
\def\mref#1{\Alph{#1}}
\def\maintheoremdeclaration#1{\stepcounter{mt}\newcounter{#1}\setcounter{#1}{\arabic{mt}}}
\begin{document}

\title[Continuity of the total cost with relativistic cost functions]{On the continuity of the total cost in the mass transport problem with relativistic cost functions}
\author{Jean Louet}
\address{CEREMADE, Universit\'e Paris-Dauphine \\ Place du Mar\'echal de Lattre de Tassigny, 75~775 Paris cedex 16, France}  
\address{INRIA, MOKAPLAN, 2 rue Simone Iff \\ 75 012 Paris, France}
\email{louet@ceremade.dauphine.fr}
\author{Aldo Pratelli}
\author{Florian Zeisler}
\address{Department Mathematik, Friederich-Alexander Universit\"at Erlangen-N\"urnberg \\
Cauerstrasse,11, 91058 Erlangen (Germany)}
\email{pratelli@math.fau.de \\ zeisler@math.fau.de}

\begin{abstract}
In this paper we consider the mass transport problem in the case of a relativistic cost; we can establish the continuity of the total cost, together with a general estimate about the directions in which the mass can actually move, under mild assumptions. These results generalize those of the recent paper~\cite{bpp}, also positively answering some of the open questions there.
\end{abstract}

\maketitle

\section{Introduction}

In this paper we concentrate on a particular question in the mass transport problem. The general mass transport problem in $\R^d$, which is now widely known (for a wide source on that, refer on the book~\cite{vil}), consists in considering two probability measures $\mu\neq \nu$ in $\R^d$, and trying to minimize the cost of the transport plans between $\mu$ and $\nu$. More precisely, the set $\Pi(\mu,\nu)$ of the transport plans is given by all the measures $\gamma$ on $\R^d\times \R^d$ whose marginals on the two copies of $\R^d$ are $\mu$ and $\nu$ respectively. And the cost of a transport plan $\gamma$ is given by
\[
\iint_{\R^d\times\R^d} c(x,y)\, d\gamma(x,y)\,,
\]
where $c:\R^d\times\R^d\to [0,+\infty]$ is some given l.s.c. function, called \emph{cost function}. It is immediate to show that the set of transport plans is never empty, and in particular there exist always minimizers of the cost, which are called \emph{optimal transport plans}.\par

A quite interesting example of a cost function is the so-called \emph{relativistic heat cost}, first introduced by Brenier in~\cite{Br}, and which is defined as $c(x,y)=h(y-x)$, where
\begin{equation}\label{brencost}
h(z)=\left\{\begin{array}{ll}
1-\sqrt{1-|z|^2} &|z|\leq 1\,,\\
+\infty&|z|>1\,.
\end{array}\right.
\end{equation}
Notice that the function $h$ is strictly convex in a strictly convex subset of $\R^d$ (the closed unit ball), and $+\infty$ outside. The study of the transport with relativistic heat cost was again studied in~\cite{McCaPu}, and generalized to the case of \emph{relativistic cost functions} in~\cite{bp}. The relativistic cost functions are defined again as $c(x,y)=h(y-x)$, but this time $h$ is a generic function which is strictly convex and bounded in a strictly convex and bounded subset $\BigC$ of $\R^d$, and $+\infty$ outside, see Definition~\ref{defrelco}. Actually, when speaking about the transport problem with a relativistic cost, there is an additional parameter $t>0$, corresponding to the time; more precisely, the relativistic cost functions are the functions $c_t$, for every $t>0$, which are defined as
\[
c_t(x,y) = h\bigg(\frac{y-x}t\bigg)\,.
\]
Notice that, if $t$ is very small, then $c_t(x,y)=+\infty$ for all the points $x,\,y$ which are not extremely close to each other, so it is easy to guess that all the tranport plans have infinite cost: this corresponds to the fact that, in a relativistic context, particles cannot move faster than a given maximal velocity, hence in a very short time it is simply impossible to transport the density $\mu$ onto $\nu$; instead, for $t$ bigger and bigger, not only it is possible to transform $\mu$ onto $\nu$, but it becomes also cheaper and cheaper. In particular, the following results were proved in~\cite{bp}; here, and through the whole paper, by $\C_t:\Pi(\mu,\nu)\to [0,\infty]$ we denote the \emph{cost relative to the time $t$}, and by $\C:(0,+\infty)\to [0,+\infty]$ the \emph{minimal cost} corresponding to the time $t$, which are given by
\begin{align*}
\C_t(\gamma) = \iint_{\R^d\times\R^d} c_t(x,y)\, d\gamma(x,y)\,, &&
\C(t) = \min \Big\{ \C_t(\gamma):\, \gamma\in\Pi(\mu,\nu)\Big\}\,.
\end{align*}
\begin{theorem}\label{first}
Let $\mu\neq \nu$ be two probability measures with compact support. Then there exists a time $T>0$, called \emph{critical time}, such that $\C(t)=+\infty$ for every $0<t<T$, while $\C(t)$ is bounded for every $t\geq T$; and moreover, the function $\C$ is non-increasing and right-continuous on the interval $[T,+\infty)$. Finally, there exists a unique optimal transport plan $\gamma_t$ for every $t>T$.
\end{theorem}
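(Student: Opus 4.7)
The plan is to take $T := \inf\{t > 0 : \C(t) < +\infty\}$ as the critical time and verify each claim in turn. To see $T > 0$, I would use that $\mu \neq \nu$ with compact supports gives $W_\infty(\mu,\nu) > 0$, and that $\BigC$ is bounded, say contained in some $B(0,R)$: any $\gamma$ with $\C_t(\gamma) < +\infty$ must satisfy $|y - x| \leq tR$ on a set of full $\gamma$-measure, which forces $W_\infty(\mu,\nu) \leq tR$, hence $t \geq W_\infty(\mu,\nu)/R$. To see $T < +\infty$, I would pick $t_0$ large enough that $\spt(\nu) - \spt(\mu) \subset t_0\, \BigC$; then for every admissible plan the integrand $c_{t_0}$ is bounded by $\sup_{\BigC} h < +\infty$, so $\C(t_0) < +\infty$.

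Next I would establish monotonicity via the pointwise inequality $c_{t'}(x,y) \leq c_t(x,y)$ for $t' \geq t$, which comes from the structural hypothesis on relativistic costs (namely, that $\BigC$ is a convex body containing the origin and $h$ is minimized there, so $s \mapsto h(z/s)$ is non-increasing on $s > 0$ for every admissible $z$); integration then yields $\C(t') \leq \C(t)$. Existence of an optimal plan for each $t$ with $\C(t) < +\infty$ follows from the direct method: $\Pi(\mu,\nu)$ is weakly compact by tightness, and $\gamma \mapsto \C_t(\gamma)$ is weakly lower semicontinuous because $c_t$ is l.s.c.\ (being the composition of the l.s.c.\ function $h$ with the continuous map $(x,y)\mapsto(y-x)/t$). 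The same compactness/l.s.c.\ argument applied to a sequence $t_n \downarrow T$ with optimal plans $\gamma_n$ also yields $\C(T) < +\infty$.

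For right-continuity of $\C$ at any $t \geq T$, monotonicity already gives $\lim_{s\downarrow t}\C(s) \leq \C(t)$. For the reverse inequality, I would take $s_n \downarrow t$ with optimal plans $\gamma_n$ and a weak subsequential limit $\gamma \in \Pi(\mu,\nu)$; the key point is that $(x,y,s) \mapsto c_s(x,y) = h((y-x)/s)$ is jointly l.s.c.\ on $\R^d \times \R^d \times (0,+\infty)$, as a composition of the l.s.c.\ $h$ with a continuous map. A standard $\Gamma$-liminf/weak-convergence argument then yields $\C_t(\gamma) \leq \liminf_n \C_{s_n}(\gamma_n) = \lim_n \C(s_n)$, so $\C(t) \leq \lim_{s \downarrow t}\C(s)$, closing the loop.

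The genuinely delicate step is the uniqueness of $\gamma_t$ for $t > T$, and this is where the strict convexity of $h$ on $\BigC$ must be exploited. My plan would be to show that any optimal $\gamma_t$ is in fact induced by a transport map, via a Brenier-type argument adapted to the relativistic setting: construct a Kantorovich-type potential whose $c_t$-subdifferential is $\mu$-a.e.\ a singleton thanks to strict convexity, so that uniqueness of the map translates into uniqueness of the plan. The main obstacle here is that $c_t$ takes the value $+\infty$, which forces one to work carefully with the effective domain $\{c_t < +\infty\}$ and to argue only at interior points where the usual differentiation tools apply; the fact that the statement restricts to $t > T$ (and not $t = T$) reflects exactly this subtlety, since at $t = T$ the optimal plan may concentrate on the boundary $\{(y-x)/T \in \partial\BigC\}$ where strict convexity degenerates.
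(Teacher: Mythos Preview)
The paper does not prove Theorem~\ref{first}; it is quoted from~\cite{bp} as background, and no argument for it appears anywhere in the text. There is therefore no in-paper proof to compare your proposal against. Your sketch follows the standard route one would expect for such a statement, so let me just comment on two points where it is loose.

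First, your claim that ``the same compactness/l.s.c.\ argument'' gives $\C(T)<+\infty$ does not quite close: joint lower semicontinuity only yields $\C_T(\gamma)\le \liminf_n \C(t_n)$, and nothing you have written bounds $\C(t_n)$ as $t_n\downarrow T$ (monotonicity goes the wrong way). The clean fix is to note that each optimal $\gamma_n$ is supported on the closed set $\{y-x\in t_n\,\BigC\}$, hence any weak limit $\gamma$ is supported on $\{y-x\in T\,\BigC\}$, and then simply bound $\C_T(\gamma)\le \|h\|_{L^\infty}<+\infty$.

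Second, and more substantively, your plan for uniqueness via a Kantorovich potential whose $c_t$-superdifferential is a singleton $\mu$-a.e.\ requires differentiating the potential $\mu$-a.e., and for that you need $\mu\ll\L$ (or at least that $\mu$ gives no mass to $(d-1)$-rectifiable sets). The statement of Theorem~\ref{first} as reproduced here carries no such hypothesis on $\mu$; in~\cite{bp} the uniqueness part is in fact established under $\mu\ll\L$, and this is also how the result is used throughout the present paper (both Theorem~\ref{Thbpp} and Theorem~\mref{Main} assume $\mu\ll\L$). So your approach is the right one, but you should be explicit that it needs this extra hypothesis, which the paper is tacitly importing from~\cite{bp}.
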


The study of the transport problem in a relativistic context has been then continued in the very recent paper~\cite{bpp}. There, the authors have introduced a subclass of the relativistic cost functions, namely, the \emph{highly relativistic cost functions}, see Definition~\ref{defhighrelco}. Basically, a relativistic cost is called ``highly relativistic'' if the slope of $h$ explodes on the boundary $\partial\BigC$ of the convex set where $h<\infty$. The reason to introduce this subclass is simple: observe that the original relativistic heat cost defined in~(\ref{brencost}) is highly relativistic; one can also notice that some of the nice properties which hold in this model case actually depend on the infinite slope of $h$ at the boundary. In fact, in the paper~\cite{bpp} the following results were proved.
\begin{theorem}\label{Thbpp}
Let $\mu\neq\nu$ be two probability measures with compact support in $\R^d$, and assume that $c_t$ is a highly relativistic cost function, and that $\mu\ll \L$, being $\L$ the Lebesgue measure on $\R^d$. Then,
\begin{itemize}
\item[(i)] If $\mu\in L^\infty$, then the function $t\mapsto\C(t)$ is continuous on $[T,+\infty)$.
\item[(ii)] For every supercritical time $t>T$, the optimal plan $\gamma_t$ satisfies
\[
\gamma_t\bigg(\bigg\{(x,y)\in\R^d\times\R^d:\, \frac{y-x}t \in \partial \BigC\bigg\}\bigg)=0\,.
\]
\end{itemize}
\end{theorem}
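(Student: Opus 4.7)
My plan is to prove the two parts in order, first~(ii), then~(i) (which will rely on~(ii)). Since Theorem~\ref{first} already gives that $\C$ is non-increasing and right-continuous on $[T,+\infty)$, for~(i) I only need left-continuity at every $t_0>T$.

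For~(ii), the strategy is to argue by contradiction and build a cheaper competitor using the infinite slope of $h$ at $\partial\BigC$. Assume $\gamma_t(A)>0$ with $A=\{(x,y):(y-x)/t\in\partial\BigC\}$. Using the strict convexity of $\BigC$ and a Lusin/Vitali-type selection, I first pick a subset $E\subset A$ of positive $\gamma_t$-mass on which the vectors $(y-x)/t$ cluster near a single boundary point $v_0\in\partial\BigC$, with a well-defined common inward direction $n_0$. Second, I modify $\gamma_t$ on $E$ by pushing the $y$-coordinates slightly inward: each $(x,y)\in E$ is replaced by $(x,y-\delta t n_0)$ for a small $\delta>0$; the first marginal is preserved, while the second is perturbed by a shift on a portion of mass $m=\gamma_t(E)$. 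Third, I restore the $\nu$-marginal by gluing this modification with a short-range correcting transport between the shifted measure and the original; using $\mu\ll\L$ the correction can be chosen diffuse, and, after the standard normalization $h(0)=0$, its cost at time $t$ is bounded by $O(m\delta)$ (indeed $O(m\delta^2)$ when $h$ is smooth at $0$ since then $\nabla h(0)=0$). Fourth, the highly relativistic assumption gives $[h(v_0)-h(v_0-\delta n_0)]/\delta\to+\infty$, so the per-mass gain $m\,[h(v_0)-h(v_0-\delta n_0)]$ strictly dominates the correction cost for $\delta$ small; the resulting plan has cost below $\C(t)$, contradicting the optimality of $\gamma_t$.

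For~(i), given $t_n\uparrow t_0$, I build competitors $\tilde\gamma_n\in\Pi(\mu,\nu)$ with $\limsup\C_{t_n}(\tilde\gamma_n)\le\C(t_0)$. For $\eta>0$ set
\[
A_\eta:=\Big\{(x,y):\operatorname{dist}\big((y-x)/t_0,\partial\BigC\big)<\eta\Big\};
\]
by~(ii), $\gamma_{t_0}(A_\eta)\to 0$ as $\eta\to 0$. I take $\tilde\gamma_n=\gamma_{t_0}\res A_\eta^c+\sigma_{n,\eta}$, where $\sigma_{n,\eta}$ is a plan with marginals $\pi^X_\#(\gamma_{t_0}\res A_\eta)$ and $\pi^Y_\#(\gamma_{t_0}\res A_\eta)$, supported on displacement vectors lying in a fixed compact subset of $\BigC$ (so valid at every $t_n$ close to $t_0$) and with cost $\le K\gamma_{t_0}(A_\eta)$. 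The construction of $\sigma_{n,\eta}$ uses $\mu\in L^\infty$ essentially: the $L^\infty$-bound permits a uniform ``spreading'' of the small $\mu$-mass of the restricted first marginal and its pairing with the restricted second marginal by a short-range transport of bounded cost. On $A_\eta^c$, the vectors $(y-x)/t_0$ are uniformly bounded away from $\partial\BigC$, so $(y-x)/t_n\in\BigC$ for $n$ large, and dominated convergence (with $h$ bounded on $\BigC$) gives $\C_{t_n}(\gamma_{t_0}\res A_\eta^c)\to\C_{t_0}(\gamma_{t_0}\res A_\eta^c)$. Summing, $\limsup_n\C(t_n)\le\C(t_0)+K\gamma_{t_0}(A_\eta)$; letting $\eta\to 0$ gives the result, and the reverse inequality is automatic from monotonicity.

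The main obstacle I expect in both parts is the construction of the correcting plans. In~(ii), the correction must be chosen so that it does not cancel the sharp gain from the inward shift: a naive deterministic back-shift by $+\delta t n_0$ exactly undoes the perturbation, and $\mu\ll\L$ is used to smear the correction over a region disjoint from the boundary layer so that its cost is genuinely of lower order. In~(i), the delicate point is producing $\sigma_{n,\eta}$ whose cost depends only on the mass $\gamma_{t_0}(A_\eta)$ and not on the geometry of its support; this is where $\mu\in L^\infty$ becomes indispensable, since pure absolute continuity would permit the $\mu$-density to concentrate on thin sets and force expensive long-range replacements.
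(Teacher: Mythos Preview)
Both parts of your plan founder at exactly the point you flag as the ``main obstacle'': the construction of the correcting plans is not just delicate, it does not work as written.

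For~(ii), you perturb the \emph{second} coordinate, $y\mapsto y-\delta t n_0$. This changes the $\nu$-marginal by the full mass $m=\gamma_t(E)$, not by $O(m\delta)$. Any repair then either (a) is composed with the shifted plan, so the effective displacement $z-x$ lands back near $y-x$ and the gain evaporates (the cost $h((z-x)/t)$ is \emph{not} bounded by the sum of the costs of the two legs, so ``short-range correction of cost $O(m\delta)$'' does not translate into a bound on the composed cost), or (b) is added as a separate piece, which would need first marginal zero and hence cannot be a positive measure. Your appeal to $\mu\ll\L$ to ``smear the correction'' is misplaced: the defect lives on the $\nu$-side, and nothing is assumed about $\nu$. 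The paper's argument perturbs the \emph{first} coordinate instead: using $\mu\ll\L$, on each fiber $\{\tau=\hbox{const}\}$ one builds a monotone map $f_\tau$ pushing $x$ toward $y$ while preserving $\mu_\tau$ up to a $\delta$-fraction; the resulting first-marginal defect has mass only $\delta m$, and is then repaired by the Chain Lemma using an auxiliary optimal plan $\gamma'$ at some fixed $t'<t$, at cost $O((N{+}1)\delta m)$. Sending $\delta\to 0$ against the infinite slope gives the contradiction.

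For~(i), everything rests on producing $\sigma_{n,\eta}\in\Pi(\mu',\nu')$, with $\mu'=\pi_1(\gamma_{t_0}\res A_\eta)$ and $\nu'=\pi_2(\gamma_{t_0}\res A_\eta)$, supported on displacements in a fixed compact subset of $\BigC$. Such a plan need not exist: the critical time of the sub-problem $(\mu',\nu')$ can exceed $T$ (take $\mu$ uniform on $[0,1]$, $\nu$ uniform on $[1,2]$, and restrict to the extreme ends). An $L^\infty$ bound on $\mu$ does nothing to prevent this purely geometric obstruction. The paper circumvents this not by an ad hoc $\sigma_{n,\eta}$ but again via the Chain Lemma, borrowing pieces of the optimal plan $\gamma'$ at a fixed earlier time $t'$ (whose displacements already lie in $t'\BigC\subset t_n\BigC$) and patching the marginals with short in-cube couplings. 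Note also that the paper's proof of~(i) does not use~(ii) at all: it handles the near-boundary mass directly via the same $f_\tau$-shift, and therefore works for general relativistic costs where $\gamma_{t_0}(A_\eta)$ need not tend to zero. Your reduction of~(i) to~(ii), even if it could be completed, would be strictly confined to the highly relativistic case.
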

Let us now briefly discuss the above result, and then its assumptions. The first claim simply states the continuity of the total cost; the second one is basically saying that the optimal transport does not move the points ``with maximal distance'', that is, the vectors lying in the boundary of $\BigC$ are almost never used: notice that the cost, on the boundary of $\BigC$, is still bounded, so it is not obvious that it should not be convenient to use also such vectors
.\par

Let us now pass to discuss the assumptions of the two claims in Theorem~\ref{Thbpp}: while the fact that $\mu$ is bounded for the first claim is only a technical assumption, just helping to simplify the notations, the ``serious'' assumptions are that $\mu$ must be absolutely continuous with respect to the Lebesgue measure, and that the cost is not just relativistic, but also highly relativistic. The counterexamples in~\cite{bpp} ensure that both the claims are false without the absolute continuity, as well as the second claim without the assumption that $c_t$ is highly relativistic. Instead, the question whether the highly relativistic assumption is essential also for~(i) was set as an open question at the end of that paper.\par

The aim of this paper is to generalize Theorem~\ref{Thbpp} as much as possible. In particular, we will show that for the relativistic cost functions (not necessarily highly relativistic, then) the first claims is still valid (even removing the boundedness assumption for $\mu$), as well as a generalisation of the second one, which basically says that an optimal transport plan does not use those vectors in the boundary $\BigC$ where $h$ has infinite slope (which form the whole $\partial\BigC$ if the cost is highly relativistic). More precisely, our main result reads as follows.
\maintheorem{Main}{Let $\mu\neq\nu$ be two probability measures with compact support in $\R^d$, and assume that $c_t$ is a relativistic cost function, and that $\mu\ll \L$. Then,
\begin{itemize}
\item[(i)] The function $t\mapsto\C(t)$ is continuous on $[T,+\infty)$.
\item[(ii)] For every supercritical time $t>T$, the optimal plan $\gamma_t$ satisfies $\gamma_t \big(\big\{(x,y):\, \frac{y-x}t \in\Theta\big\}\big)=0$, where
\[
\Theta := \Big\{v \in \partial\BigC:\, D_{-v}h(v)=-\infty \Big\}\,,
\]
and $D_{-v} h(v) \in [-\infty,+\infty)$ is the slope of $h$ at the point $v$ in the direction $-v$.
\end{itemize}}

Our constructions are reminiscent of those made in~\cite{bpp}, but there are some fundamental differences and new ideas, which are necessary in order to deal with the quite weaker assumption of Theorem~\mref{Main} with respect to Theorem~\ref{Thbpp}, in particular with the fact of considering relativistic, but not highly relativistic cost functions.

The plan of the paper is simple: in Section~\ref{secprel} we list all the relevant definitions, notations, and the known technical facts, among which the so-called Chain Lemma (Lemma~\ref{chle}). Then, in the Sections~\ref{seccont} and~\ref{seclocat} we prove the two parts of our Theorem~\mref{Main}.

\section{Notation and preliminary results\label{secprel}}

In this section we collect some standard notation about the mass transport problem, and we give the relevant definitions about relativistic and highly relativistic cost functions.\par

Let $X$ and $Y$ be two Polish spaces (through the paper, we will only be interested in the case $X=Y=\R^d$), and let $\mu$ and $\nu$ be two probabilities on $X$ and $Y$ respectively. A probability measure $\gamma$ on $X\times Y$ is called a \emph{transport plan} if its two marginals on $X$ and $Y$ coincide with $\mu$ and $\nu$; the collection of the transport plans is denoted by $\Pi(\mu,\nu)$. Given a l.s.c. function $c:X\times Y\to [0,+\infty]$, the \emph{cost} of the plan $\gamma$ is given by
\[
\iint_{X\times Y} c(x,y)\, d\gamma(x,y)\,,
\]
and $\gamma$ is called an \emph{optimal transport plan} if it minimizes the cost among elements of $\Pi(\mu,\nu)$. A particular case of cost functions, namely, the relativistic ones, is now introduced.

\begin{definition}\label{defrelco}
Let $\BigC$ be a closed, bounded, convex set in $\R^d$, containing the origin in its interior, and let $h:\R^d\to [0,+\infty]$ be a function which is strictly convex and bounded on $\BigC$, constantly $+\infty$ on $\R^d\setminus \BigC$, and such that $h(0)=0$. Then, for any $t>0$ we define the function $c_t:\R^d\times \R^d\to [0,+\infty]$ as
\[
c_t(x,y) = h\bigg(\frac {y-x}t\bigg)\,.
\]
Such functions $c_t$ are called \emph{relativistic cost functions}, and for any $t>0$ we denote by $\C_t(\gamma)$ the cost of a plan $\gamma\in\Pi(\mu,\nu)$ with respect to $c_t$, and by $\C(t)$ the minimum of these costs. For simplicity of notations, we will denote by $\|h\|_{L^\infty}$ the maximum of $h$ on $\BigC$.
\end{definition}

A useful concept in mass transportation is the composition of plans, which can be simply defined thanks to a disintegration of the plans (for the definition of disintegration of measures, one can refer for instance to~\cite{AFP}).

\begin{definition}[Composition of transport plans]\label{compos}
Let $\mu,\, \alpha$ and $\nu$ be three probability measures on the Polish spaces $X,\, Y$ and $Z$ respectively, and $\gamma_1\in \Pi(\mu,\alpha)$ and $\gamma_2\in \Pi(\alpha,\nu)$ be two transport plans. Let us disintegrate $\gamma_1$ and $\gamma_2$ with respect to the projection on $Y$, so writing $\gamma_1= \gamma_1^y\otimes \alpha$ and $\gamma_2 = \alpha \otimes \gamma_2^y$. Then, the \emph{composition of $\gamma_1$ and $\gamma_2$} is the transport plan $\gamma_2\circ\gamma_1\in \Pi(\mu,\nu)$, defined as
\[
\iint_{X\times Z} \varphi(x,z) \, d\gamma_2\circ\gamma_1(x,z) := \int_Y \bigg( \iint_{X\times Z} \varphi(x,z)\, d\gamma_1^y(x)\, d\gamma_2^y)^z\bigg) \, d\alpha (y)
\]
for every $\varphi\in {\rm C}_b(X\times Z)$. It is immediate to check that $\gamma_2\circ \gamma_1$ is a transport plan with marginals $\mu$ and $\nu$, as well as that $(x,z)\in\spt(\gamma_2\circ\gamma_1)$ if and only if there exists some $y\in Y$ such that $(x,y)\in\spt\gamma_1$ and $(y,z)\in \spt\gamma_2$.
\end{definition}

We present here two elementary results about convex sets and convex functions.
\begin{lemma}\label{stupgeolem}
Let $c_t$ be a relativistic cost function, let $A$ be the minimum of $|v|$ for vectors $v\in\partial\BigC$, and let $\bar\delta$ and $r^+$ be positive numbers, both much smaller than $A$. Then, there exists an increasing function $\phi:\R^+\to\R^+$, with $\lim_{\eta\to 0} \phi(\eta)=0$, such that
\[
\bigg|h(v)-h\bigg(\frac{v-\eta w}t\bigg) \bigg| < \phi(\eta)
\]
holds for every $v,\, w\in \R^d$ and $t,\, \eta>0$ satisfying
\begin{align*}
v \in \BigC\setminus(1-\bar\delta)\BigC\,, && |w-v| \leq r^+\sqrt d\,, && |1-t| < \frac \eta 2\,.
\end{align*}
\end{lemma}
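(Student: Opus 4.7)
The plan is to reduce the desired inequality to the uniform continuity of $h$ on the compact set $\BigC$. The key preliminary observations are that $(v-\eta w)/t$ actually lies in $\BigC$ (so that $h$ is finite there), and that the Euclidean distance between $v$ and $(v-\eta w)/t$ is uniformly of order $\eta$ on the configurations under consideration.

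For the containment, I would write
\[
\frac{v-\eta w}{t} \;=\; \alpha v \,+\, (1-\alpha)\,z,
\qquad \text{where } \alpha := \frac{1-\eta}{t},\ \ z := -\frac{\eta(w-v)}{t(1-\alpha)}.
\]
From $|t-1|<\eta/2$ one obtains $t\geq 1/2$, $\alpha<1$, and $1-\alpha\geq\eta/4$ for small $\eta$; combined with $|w-v|\leq r^+\sqrt d$, this gives $|z|\leq 8\,r^+\sqrt d$. The hypothesis that $r^+$ is much smaller than $A$ then ensures $|z|<A$, hence $z\in B(0,A)\subset\BigC$; by convexity $(v-\eta w)/t\in\BigC$. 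For the distance, a direct estimate
\[
\bigg|v-\frac{v-\eta w}{t}\bigg| \;=\; \bigg|\frac{(t-1)v+\eta w}{t}\bigg| \;\leq\; \frac{|t-1|\,|v|+\eta\,|w|}{t} \;\leq\; C_1\eta
\]
produces a constant $C_1$ depending only on $\mathrm{diam}\,\BigC$ and $r^+$ (using that $|v|,|w|$ are bounded in terms of those quantities). Finally, since $h$ is continuous on the compact set $\BigC$, it is uniformly continuous there with some increasing modulus $\omega:\R^+\to\R^+$, $\omega(0^+)=0$; setting $\phi(\eta):=\omega(C_1\eta)$ gives an increasing function vanishing at $0$, for which
\[
\bigg|h(v)-h\bigg(\frac{v-\eta w}{t}\bigg)\bigg| \;\leq\; \omega\bigg(\bigg|v-\frac{v-\eta w}{t}\bigg|\bigg) \;\leq\; \omega(C_1\eta) \;=\; \phi(\eta).
\]

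The main delicate point is the containment step: the smallness assumption $r^+\ll A$ is precisely designed so that the lateral perturbation $-\eta(w-v)/t$ of size $O(\eta)$ can be absorbed into the ``radial gap'' $1-\alpha$ of comparable order $\eta$, using that $\BigC$ contains the ball $B(0,A)$. Without it the perturbed point could fall outside $\BigC$ and the inequality would become vacuous (with $h((v-\eta w)/t)=+\infty$); once it is secured, the remainder is a standard compactness-and-modulus-of-continuity argument, and no further use of strict convexity or of the precise shape of $\BigC$ is required.
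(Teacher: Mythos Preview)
Your argument is correct and follows exactly the approach the paper sketches: the paper's proof simply says that by uniform continuity of $h$ on $\BigC$ it suffices to show $(v-\eta w)/t\in\BigC$, calling this ``a trivial geometric property, since $\BigC$ is a bounded, convex set,'' and you have supplied precisely that verification via the convex-combination decomposition together with the $O(\eta)$ distance estimate. Note that neither you nor the paper uses the hypothesis $v\notin(1-\bar\delta)\BigC$ in the proof; it is there only because that is the regime arising in the application.
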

\begin{proof}
Since the function $h$ is uniformly continuous on $\BigC$, we only have to show that, for $v,\, w$ and $t$ as in the claim, one has
\[
\frac{v-\eta w}t \in \BigC\,,
\]
and this is in turn a trivial geometric property, since $\BigC$ is a bounded, convex set.
\end{proof}

Let us now give the following definition of ``directional derivative'' for a relativistic cost.

\begin{definition}\label{dirder}
Let $\BigC$ and $h$ be as in Definition~\ref{defrelco}, let $P\in\BigC$, and let $v\in\S^{d-1}$ be an \emph{internal direction}, that is, a vector such that $P+\eps v$ belongs to the interior of $\BigC$ for every $0<\eps\ll 1$. We define then \emph{directional derivative} of $h$ at $P$ in the direction $v$ the number
\[
D_v h(P) := \lim_{\eps\searrow 0} \frac{h(P+\eps v)-h(P)}\eps\,.
\]
Notice that $D_v h(P)$ is the right derivative at $0$ of a real one-dimensional convex function defined in a right neighborhood of $0$, hence it always exists and it belongs to $[-\infty,+\infty)$. Notice also that the set $\S_P$ of the internal directions at $P$ is the whole $\S^{d-1}$ if $P$ belongs to the interior of $\BigC$, while for $P\in\partial\BigC$ it is an open subset of $\S^{d-1}$, in particular an open half-sphere if $\partial\BigC$ admits a normal vector at $P$.
\end{definition}

We can now observe that the directional derivatives are either all finite or all infinite.
\begin{lemma}\label{alwrealwin}
Let $\BigC$ and $h$ be as in Definition~\ref{defrelco}, and $P$ and $\S_P$ as in the Definition~\ref{dirder}. Then, the function $v\mapsto D_v h(P)\in [-\infty,+\infty)$, defined on $\S_P$, is continuous. In addition, it is either real on the whole $\S_P$, or constantly $-\infty$.
\end{lemma}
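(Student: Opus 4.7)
The plan is to promote $D_v h(P)$ to a sublinear function on an open convex cone in $\R^d$ and then invoke standard convex-analysis facts. Concretely, let $K\subset\R^d$ be the set of vectors $w\neq 0$ such that $P+\eps w\in\mathrm{int}(\BigC)$ for all sufficiently small $\eps>0$; this is an open convex cone (namely the interior of the tangent cone to $\BigC$ at $P$), nonempty because $0\in\mathrm{int}(\BigC)$, and one clearly has $\S_P=K\cap\S^{d-1}$. For every $w\in K$ the formula $g(w):=\lim_{\eps\searrow 0}\frac{h(P+\eps w)-h(P)}{\eps}$ makes sense and defines a value in $[-\infty,+\infty)$, exactly as in Definition~\ref{dirder}, since $\eps\mapsto h(P+\eps w)$ is a real-valued convex function on a right neighborhood of $0$. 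By positive homogeneity on each ray, the dichotomy for $g$ on $K$ will be equivalent to the dichotomy for $D_\cdot h(P)$ on $\S_P$.

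First I would verify that $g:K\to[-\infty,+\infty)$ is sublinear. Positive homogeneity $g(\lambda w)=\lambda g(w)$ for $\lambda>0$ is a direct change of variable; subadditivity $g(w_1+w_2)\le g(w_1)+g(w_2)$ is the standard consequence of the convexity of $h$, obtained by writing $P+\eps(w_1+w_2)=\tfrac12(P+2\eps w_1)+\tfrac12(P+2\eps w_2)$, applying convexity of $h$, dividing by $\eps$, and sending $\eps\to 0$ (with the substitution $\eta=2\eps$ on the right). In particular $g$ is a convex function on the open convex set $K$. The dichotomy claimed in the statement then reduces to the following standard fact: a convex function on an open convex subset of $\R^d$ with values in $[-\infty,+\infty)$ is either identically $-\infty$ or nowhere $-\infty$. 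Indeed, if $g(w_0)=-\infty$ and $w_1\in K$ is arbitrary, by openness of $K$ one can take $w_2:=w_1+\eps(w_1-w_0)\in K$ for $\eps>0$ small; this gives $w_1=(1-t)w_0+t w_2$ with $t=\tfrac1{1+\eps}\in(0,1)$, and convexity of $g$ yields $g(w_1)\le(1-t)g(w_0)+tg(w_2)$. Here the first summand equals $-\infty$ because $1-t>0$, while the second is strictly less than $+\infty$ because $g(w_2)\in[-\infty,+\infty)$; hence $g(w_1)=-\infty$.

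Once the dichotomy is established, continuity is immediate: if $g\equiv-\infty$ on $K$, then its restriction to $\S_P$ is constant; otherwise $g$ is a real-valued convex function on the open convex set $K\subset\R^d$ and is therefore locally Lipschitz by the classical convex-analysis theorem, so its restriction to $\S_P=K\cap\S^{d-1}$ is continuous in the sphere topology. The main subtlety I anticipate is precisely the dichotomy step: it is essential to place $w_0$ at an endpoint of the segment $[w_0,w_2]$ rather than in its interior, because only then does the coefficient of $g(w_0)=-\infty$ remain strictly positive while the other term $t\,g(w_2)$ is automatically not $+\infty$; the naive choice $w_0=\tfrac12(w_1+w_2)$ would only show that at least one of $g(w_1),g(w_2)$ equals $-\infty$, which does not propagate the value $-\infty$ to the prescribed point $w_1$.
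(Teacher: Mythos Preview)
Your argument is correct and takes a cleaner, more abstract route than the paper's. The paper proves the dichotomy by a direct geometric construction: given $v$ with $D_v h(P)>-\infty$ and a nearby direction $\omega$, it places auxiliary points $Q=P+\delta v$, $R=P+\tfrac{\delta}{2}\omega$, and a third point $S\in\BigC$ on the ray from $R$ through $Q$, writes $Q$ as a convex combination of $R$ and $S$, and uses $h(S)\le\|h\|_{L^\infty}$ to obtain the explicit lower bound $D_\omega h(P)\ge 2D_v h(P)-\tfrac{3}{\ell}\|h\|_{L^\infty}$; this is then propagated across $\S_P$. Continuity is proved by a refinement of the same construction yielding two-sided bounds. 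Your approach instead recognises $g(w)=D_w h(P)$ as a sublinear (hence convex) function on the open cone $K$ and invokes two standard convex-analysis facts: the $-\infty$/finite dichotomy for extended-real convex functions on open convex sets, and local Lipschitz continuity of real-valued convex functions. This is shorter and, notably, never uses the boundedness of $h$ on $\BigC$, whereas the paper's argument is fully self-contained. One small caveat: when $P\in\mathrm{int}(\BigC)$ your $K=\R^d\setminus\{0\}$ is not convex; either handle this (trivial) case separately, as the paper does, or drop the restriction $w\neq 0$ and set $g(0)=0$, so that $K$ is literally the interior of the tangent cone and is open and convex in every case.
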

\begin{proof}
The only interesting case is when $P\in\partial\BigC$, since otherwise everything is trivial by the convexity of $h$ and $\BigC$, and in particular $D_v h(P)$ is always real. Hence, we suppose from now on that $P\in\partial\BigC$; let us first prove that the map $v\mapsto D_v$ is either always real or constantly $-\infty$, and then the continuity.
\step{I}{The realness (or constant infiniteness) of the directional derivatives.}
Let us start by taking some internal direction $v\in \S_P$, and let us assume for a moment that $D_v h(P)>-\infty$, so $D_v h(P)\in\R$. Let now $\omega$ be another internal direction; an immediate geometrical consideration ensures that, if $\omega$ is close enough to $v$, then there exists some positive constant $\ell>0$ with the following property. For every $0<\delta\ll 1$, if we define
\begin{align}\label{defQR}
Q = P +\delta v\,, && R = P + \frac \delta 2\, \omega\,, && d = |Q - R| \leq \frac 32 \, \delta
\end{align}
and we let $S$ be the point on the half-line starting at $R$ and passing through $Q$ having distance $d+\ell$ from $R$, then $S\in\BigC$. Notice carefully that a possible value of $\ell$, as well as of the necessary closeness between $\omega$ and $v$, can be obtained independently on $v$, and only depending on the exact form of $\BigC$, as well as on the distance between $v$ and the boundary of $\S_P$.\par
We can now write $Q$ as a convex combination of $R$ and $S$, in fact by construction one has
\begin{equation}\label{convcomb}
Q = \frac \ell{\ell + d} \, R+\frac d{\ell + d} \,S\,.
\end{equation}
As a consequence, the convexity of $h$ ensure
\[
h(P) + \delta D_v h(P) \leq h(Q)
\leq \frac\ell{\ell + d} \, h(R)+\frac d{\ell + d} \,h(S)
\leq h(R)+\frac d{\ell + d} \, \| h\|_{L^\infty}
\leq h(R)+\frac {3\delta}{2\ell} \, \| h\|_{L^\infty}\,,
\]
from which we get
\[
h\bigg(P+\frac \delta 2 \omega\bigg) - h(P) \geq  \delta \bigg( D_v h(P) - \frac 3{2\ell}\, \|h\|_{L^\infty}\bigg)
\]
and hence, sending $\delta\searrow 0$,
\begin{equation}\label{notcont}
D_\omega h(P) \geq 2 D_v h(P) -\frac 3\ell\, \|h\|_{L^\infty}\,.
\end{equation}
This estimate immediately ensures that the subset of $\S_P$ made by the directions along which the directional derivative is not $-\infty$ form an open subset of $\S_P$. But actually, since, as underlined above, the amplitude of the neighborhood of $v$ in which the estimate~(\ref{notcont}) holds only depends on the distance of $v$ from the boundary of $\S_P$, then we directly get that, if at some direction $v\in\S_P$ one has $D_v h(P)>-\infty$, then the same inequality holds for all the directions $v\in \S_P$.\par

Summarizing, we have proved that $D_v h(P)$ is never $+\infty$, and actually it is either real for every $v\in \S_P$, or equal to $-\infty$ for every $v\in \S_P$. This concludes the first step of the proof.

\step{II}{The continuity.}
In this step we prove the continuity, which does not come from~(\ref{notcont}). Thanks to Step~I, we only have to consider the case when $D_v h(P)$ is real for every $v\in\S_P$.\par
Let us then take a direction $v\in\S_P$, select some $0< \eps\ll 1$, and take some $\ell>0$ such that $V=P+\ell v$ belongs to the interior of $\BigC$ and satisfies $h(V) \leq h(P) + \ell (D_v h(P)+ \eps)$. Let now $\omega\in\S_P$ be another direction, and for $\delta\ll \ell$ define again $Q$ and $R$ as in~(\ref{defQR}), and $S$ as in the following line. The point $S$ is arbitrarily close to $V$ (thus also in the interior of $\BigC$) as soon as $\delta$ is small enough and $\omega$ is close enough to $v$, hence by the continuity of $h$ we can assume
\[
h(S) \leq h(P) + \ell (D_v h(P) + 2\eps)\,.
\]
Notice that, differently with what happened in Step~I, this time the value of $\ell$ and the needed closeness of $\omega$ to $v$ really depend on $v$ (this is why the argument of the present step could not prove the realness that, instead, we got in Step~I). Formula~(\ref{convcomb}) is still valid, with $d=|Q-R|$ being this time arbitrarily close to $\delta/2$, again up to select $\omega$ close enough to $v$. Hence we can evaluate, similarly with what we have done in Step~I,
\[\begin{split}
h(P) + \delta D_v h(P) &\leq h(Q)
\leq \frac\ell{\ell + d} \, h(R)+\frac d{\ell + d} \,h(S)\\
&\leq \bigg( 1 - \frac d {\ell + d}\bigg) h\bigg( P + \frac \delta 2 \,\omega\bigg) + \frac d {\ell + d}\, h(P) + \frac {d\ell}{\ell+d}\, (D_v h(P)+2\eps)\,,
\end{split}\]
from which it readily follows
\begin{equation}\label{esti1}
\lim_{\delta \searrow 0}\, \frac{h\big( P +\frac \delta 2\, \omega \big) - h(P)}{\frac \delta 2} \geq D_v h(P) - 2\eps\,.
\end{equation}
The opposite inequality, namely,
\begin{equation}\label{esti2}
\lim_{\delta \searrow 0}\, \frac{h\big( P +\frac \delta 2\, \omega \big) - h(P)}{\frac \delta 2} \leq D_v h(P) + 2\eps\,,
\end{equation}
can be obtained in the very same way, just exchanging the role of $Q$ and $R$; more precisely, we define again $Q= P +\delta v$, but this time $R= P + 2\delta \omega$, and this time $S$ has distance $\ell$ from $R$, on the half-line starting at $Q$ and passing through $R$. Hence, this time we can write $R$ as a convex combination of $Q$ and $S$, and then the very same calculation which brought to~(\ref{esti1}) now bring to~(\ref{esti2}). Since $\eps$ is arbitrary, the continuity is then established and the proof is concluded.
\end{proof}

%

Thanks to the above result, it is now very simple to introduce the ``highly relativistic'' cost functions.

\begin{definition}\label{defhighrelco}
Let $h$ be as in Definition~\ref{defrelco}. The functions $c_t$ are called \emph{highly relativistic cost functions} if $D_v h(P)$ is constantly $-\infty$ on $\S_P$ for every $P\in \partial\BigC$.
\end{definition}

To conclude this section, we give the claim of the Chain Lemma (for a proof, see~\cite[Proposition~2.11]{bpp}).

\begin{lemma}[Chain Lemma]\label{chle}
Let $\gamma,\,\gamma'\in \Pi(\mu,\nu)$, $0\neq \gamma_0\leq \gamma$, and set $\mu_0=\pi_1\gamma_0$, $\nu_0=\pi_2\gamma_0$. Then, there exist $N\in\N$ and $\bar\eps>0$ such that, for every $\eps\leq\bar\eps$, there are plans $\tilde\gamma\leq \gamma$ and $\tilde\gamma'\leq \gamma'$ satisfying
\begin{align*}
&\pi_1\tilde\gamma=\pi_1\tilde\gamma'=\tilde\mu+\mu_A\,, &&\pi_2 \tilde\gamma=\tilde\nu+\nu_A\,, && \pi_2 \tilde\gamma'=\tilde\nu+\nu_B\,,\\
&\mu_A \leq \mu_0\,,\qquad \tilde\mu\leq \mu-\mu_0\,, && \nu_A,\, \nu_B \leq \nu_0\,, && \tilde\nu \wedge \nu_0 = 0\,,\\
&\|\tilde\gamma\|=\|\tilde\gamma'\|=(N+1)\eps\,, && \|\tilde\mu\|=\|\tilde\nu\|=N\eps\,, && \| \mu_A\|= \|\nu_A\|=\| \nu_B\| =\eps\,. 
\end{align*}
In particular, $\tilde\gamma$ can be decomposed as $\tilde\gamma= \tilde\gamma_0 + \tilde\gamma_\infty$, where
\begin{align*}
\tilde\gamma_0 \leq \gamma_0\,, &&
\tilde\gamma_\infty\leq \gamma - \gamma_0\,, &&
\pi_1\tilde\gamma_0=\mu_A\,, && \pi_2\tilde\gamma_0=\nu_A\,, &&
\pi_1\tilde\gamma_\infty=\tilde\mu\,, && \pi_2\tilde\gamma_\infty=\tilde\nu\,.
\end{align*}
\end{lemma}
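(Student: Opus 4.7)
The strategy is a finite iterative \emph{chain} construction, alternating between subplans of $\gamma'$ (forward, $X\to Y$) and of $\gamma$ (backward, $Y\to X$), starting from a small piece of $\mu_0$ and stopping when the accumulated second-marginal mass can be absorbed back into $\nu_0$.

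Fix a parameter $\eps>0$ to be chosen small at the end, and pick any $\mu_A\leq\mu_0$ with $\|\mu_A\|=\eps$. Set $\mu^{(0)}:=\mu_A$, and iterate as follows. Take $\sigma'_k\leq\gamma'$ with $\pi_1\sigma'_k=\mu^{(k-1)}$ of mass $\eps$ (which exists by disintegration of $\gamma'$ over $\mu$), and decompose its second marginal as $\nu_B^{(k)}+\eta_k$ with $\nu_B^{(k)}\leq\nu_0$ and $\eta_k\wedge\nu_0=0$; at non-terminal steps we try to choose $\sigma'_k$ so that $\nu_B^{(k)}=0$, and we terminate at step $N+1$ when this is no longer possible, declaring $\nu_B:=\nu_B^{(N+1)}$ of mass $\eps$. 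If the iteration continues, take $\sigma_k\leq\gamma$ with $\pi_2\sigma_k=\eta_k$: since $\eta_k\wedge\nu_0=0$ and $\pi_2\gamma_0=\nu_0$, the subplan $\sigma_k$ is automatically disjoint from $\gamma_0$, so $\sigma_k\leq\gamma-\gamma_0$ and $\mu^{(k)}:=\pi_1\sigma_k\leq\mu-\mu_0$. Proceed with $\sigma'_{k+1}$ on $\mu^{(k)}$, and so on.

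At termination, define $\tilde\gamma':=\sum_{i=1}^{N+1}\sigma'_i\leq\gamma'$, $\tilde\gamma_\infty:=\sum_{k=1}^N\sigma_k\leq\gamma-\gamma_0$, $\tilde\nu:=\sum_{k=1}^N\eta_k$, $\tilde\mu:=\sum_{k=1}^N\mu^{(k)}$, and let $\tilde\gamma_0\leq\gamma_0$ be any subplan of mass $\eps$ with first marginal $\mu_A$ and second marginal some $\nu_A\leq\nu_0$ (obtained by disintegration of $\gamma_0$ over $\mu_0$, since $\mu_A\leq\mu_0$). Setting $\tilde\gamma:=\tilde\gamma_0+\tilde\gamma_\infty$, all the marginal identities and the mass counts $(N+1)\eps$, $N\eps$, $\eps$ of the lemma follow by direct telescoping along the chain.

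The main obstacle is to ensure that the subplans $\sigma'_k$ and $\sigma_k$ selected across iterations can be chosen \emph{disjointly}, so that the cumulative sums $\sum\sigma'_i$ and $\sum\sigma_k$ remain genuine subplans of $\gamma'$ and $\gamma-\gamma_0$ without double-counting. Once this is granted, termination follows automatically: each $\mu^{(k)}$ has mass $\eps$, the $\mu^{(k)}$'s are disjoint and sit inside $\mu-\mu_0$ (of total mass $\leq 1$), so $N\leq 1/\eps$. The disjointness rests on a careful measurable selection via disintegration, combined with the choice of $\bar\eps$ small enough that at every iteration there remains enough fresh $\gamma'$-mass at $\mu^{(k-1)}$ mapping outside $\nu_0$ (to keep the chain ``growing'' as long as it has not yet terminated). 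Once these points are settled, the remaining verification of the marginal and mass identities is routine bookkeeping.
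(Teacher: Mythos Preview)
The paper does not prove this lemma: it only states the claim and refers to \cite[Proposition~2.11]{bpp} for the argument. There is therefore no proof in the present paper to compare against.

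Regarding your attempt itself, the alternating chain idea (pushing forward along $\gamma'$ and pulling back along $\gamma$, starting from a piece of $\mu_0$) is indeed the right picture, but there is a decisive gap. The lemma requires a \emph{single} integer $N$ and a threshold $\bar\eps>0$, fixed in advance, such that the construction succeeds for \emph{every} $\eps\le\bar\eps$ with that same $N$. Your only termination argument is the pigeonhole bound $N\le 1/\eps$, so your chain length depends on $\eps$ and blows up as $\eps\to 0$. This is not a cosmetic defect: both applications in the paper first extract $N$ from the Chain Lemma and only afterwards send $\eps$ (resp.\ $\delta$) to $0$, relying on the smallness of $(N+1)\eps$ or $(N+1)\delta m$; with $N\sim 1/\eps$ these error terms are of order one and the proofs of Theorem~\mref{Main} collapse.

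What is missing is an argument that the chain length is bounded \emph{before} choosing the scale: one should first produce a finite chain carrying some fixed positive mass (so that its length $N$ is an intrinsic datum of $\gamma,\gamma',\gamma_0$), set $\bar\eps$ to be a fraction of the smallest link, and then obtain the statement for arbitrary $\eps\le\bar\eps$ by rescaling that fixed chain. Your proposal inverts this order. Secondary issues---the disjointness of the $\sigma_k,\sigma'_k$ is asserted but not proved, and at the terminal step your $\nu_B^{(N+1)}$ has no reason to have mass exactly $\eps$---would also need attention, but the $\eps$-independence of $N$ is the essential missing idea.
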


\section{Continuity of the total cost\label{seccont}}

This section is devoted to prove the continuity of the total cost, that is, part~(i) of Theorem~\mref{Main}.

\proofof{Theorem~\mref{Main}, part~(i)}
Since the right continuity of the function $t\mapsto \C(t)$ is obvious (and it was also proved in the literature, see Theorem~\ref{first}), we only have to deal with the left continuity. Up to rescaling, we can assume that $T<1$ and we aim to prove the left continuity at $t=1$. Let us fix any $T<t'<1$, and let us call $\gamma$ and $\gamma'$ the optimal transport plans corresponding to $t=1$ and $t=t'$. Moreover, let us define
\[
A:= \min \big\{ |v|:\, v\in\partial\BigC\big\}\,,
\]
let us fix two small constants $r^+\ll \bar\delta\ll 1-t'$ such that
\begin{equation}\label{rbdpic}
r^+\sqrt d \ll \bar\delta A \,,
\end{equation}
and let $r$ be a third positive constant, much smaller than $r^+$. Define then the set
\[
S_{\bar\delta} := \Big\{ (x,y)\in\R^d\times\R^d:\, y-x \in (1-\bar\delta) \,\BigC \Big\}
\]
and the measure
\[
\gamma_0 := \gamma \res \Big( \R^d\times \R^d \setminus S_{\bar \delta}\Big)\,.
\]
Notice that $\gamma_0$ is the part of the transport made by the points which ``do not move too much'', that is, $y-x$ is not too close to the boundary of $\BigC$; as a consequence, it is immediate to observe that $\C_t(\gamma_0)$ converges to $\C_1(\gamma_0)$ for $t\nearrow 1$. Instead, $\gamma-\gamma_0$ is the part of the transport with which we have to deal carefully. Notice also that we do not even know that $\|\gamma-\gamma_0\|$ is small if $\bar\delta$ is small enough: this would be the case only for highly relativistic transport costs. In order to prove the claim, we will build a transport plan $\xi\in\Pi(\mu,\nu)$ whose cost satisfies
\begin{equation}\label{toprove}
\limsup_{t\nearrow 1} \C_t(\xi) \leq \C_1(\gamma) + 2 \phi\bigg(\frac{2r} A\bigg)\,,
\end{equation}
where $\phi$ is the function given by Lemma~\ref{stupgeolem}. Notice that the function $\phi$ depends on $\bar\delta$ and on $r^+$, but not on $A$ (which is a geometrical constant, only depending on $\BigC$), neither on $r$ (which is an arbitrarily small constant, in turn depending on $\bar\delta$ and $r^+$). Since the function $\phi$ is infinitesimal for $r\searrow 0$, the searched left continuity of $t\mapsto \C(t)$ at $t=1$ will be established once we prove~(\ref{toprove}). Unfortunately, the trivial choice $\xi=\gamma$ does not work, one could even have $\C_t(\gamma)=+\infty$ for every $t<1$. For the sake of clarity, we divide our construction in some steps.

\step{I}{Definition of the cubes $Q_i^1$ and $Q_i^2$, and of the measures $\gamma_i$, $\mu_i$ and $\nu_i$, and $\gamma_i^K$, $\mu_i^K$ and $\nu_i^K$.}
We start by covering the support of $\gamma-\gamma_0$ with finitely many cubes of side $r$ in $\R^d\times\R^d$, and we call $M$ their number. More precisely, for every $1\leq i \leq M$ we select two cubes $Q^1_i$ and $Q^2_i$ of side $r$ in $\R^d$, and a positive measure $\gamma_i$ concentrated in $Q^1_i\times Q^2_i$, in such a way that
\begin{itemize}
\item $\gamma-\gamma_0 = \sum_{i=1}^M \gamma_i$\,;
\item $\gamma_i\wedge \gamma_j=0$ for every $i\neq j$\,;
\item for every $i$, the cubes $Q^1_i$ and $Q^2_i$ are parallel; more precisely, $Q^2_i = Q^1_i + v_i$, and the vector $v_i$ is parallel to one of the sides of $Q^1_i$ (hence, also of $Q^2_i$).
\end{itemize}
Notice that the existence of such cubes and measures is obvious (in fact, we do not require the products $Q^1_i\times Q^2_i$ to be disjoint, but only the measures $\gamma_i$ to be mutually singular). We will call $\mu_i$ and $\nu_i$ the two marginals of $\gamma_i$ and, up to remove useless cubes, we will assume that
\[
m_i := \|\gamma_i\| = \|\mu_i\| = \|\nu_i\| >0
\]
for every $i$. We apply now the Chain Lemma~\ref{chle} to the measures $\gamma,\, \gamma'$ and $\gamma_i$, obtaining some constants $N_i$ and $\bar\eps_i$, and we call $N = \max_i N_i$ and $\bar\eps = \min_i \bar\eps_i$. Then, we fix an arbitrarily small constant $\eps\ll \min_i \{m_i\}\bar\eps/MN$, also satisfying
\begin{equation}\label{defeps}
\eps N \ll \phi\bigg(\frac{2r} A\bigg)\,,
\end{equation}
and we find $K\gg 1$ such that for every $i$ one has
\begin{equation}\label{Kgrande}
\mu_i \Big(\big\{x:\, \rho(x)\geq K\big\} \Big)  \leq \eps m_i\,,
\end{equation}
being $\rho$ the density of $\mu$ with respect to the Lebesgue measure $\L$. Then, we call
\[
\gamma_i^K = \gamma_i \res \Big\{ (x,y)\in\R^d\times \R^d:\, \rho(x) \leq K\Big\}\,,
\]
and we let $\mu_i^K$ and $\nu_i^K$ be its two marginals.

\step{II}{Definition of the transport plan $\xi^1_i$ and its cost.}
In this step, we provide the first part of the ``competitor'' transport plan $\xi$, namely, a transport plan $\xi^1_i$ for every $1\leq i\leq M$. Since this will be done for each $i$ independently, we concentrate ourselves, only within this step, on a given $1\leq i \leq M$. For further clarity, this step will be further subdivided in two substeps. Let us consider the cubes $Q^1_i$ and $Q^2_i$. For simplicity of notations, and up to a rotation and translation, we can assume that $Q^1_i=[0,r]^d$, while $Q^2_i=[a,a+r]\times [0,r]^{d-1}$. Notice that, for the ease of notation, we call ``$a$'', and not ``$a_i$'', the distance between the cubes; more in general, since in this step we only work with a fixed index $i$, we will not put a subscript ``$i$'' to all the quantities which are used only inside this step. Keep in mind that the measure $\gamma_i$ is concentrated in $\R^d\times\R^d \setminus S_{\bar \delta}$, hence by~(\ref{rbdpic}) and the fact that $a+r\sqrt d>(1-\bar\delta) A$, we have $r\sqrt d\ll a$.

\step{IIa}{Definition of the relevant sets and of the plan $\xi^1_i$.}
In this first substep, we give the definition of the plan $\xi^1_i$. Let us use the notation $x=(\sigma,\tau)\in\R\times\R^{d-1}$ for points in $\R^d$, denote by $\pi_\tau:\R^d\to\R^{d-1}$ the projection on the variable $\tau$, and disintegrate the measure $\mu_i^K$ with respect to $\pi_\tau$, obtaining the decomposition
\[
\mu_i^K = \mu_\tau \otimes \alpha\,,
\]
where $\alpha=(\pi_\tau)_\# \mu_i^K$ and the measure $\mu_\tau$ is a probability measure concentrated in $[0,r]$ for $\alpha$-a.e. $\tau$. Let us now fix an arbitrarily small constant $\delta$, much smaller than $\eps$, and for $\alpha$-a.e. $\tau\in [0,r]^{d-1}$ let
\[
f_\tau: \Big\{ (\sigma,\tau)\in [0,r]^d:\, \mu_\tau\big([0,\sigma]\big) \leq 1-\delta\Big\} \to \Big\{ (\sigma,\tau)\in [0,r]^d:\, \mu_\tau\big([0,\sigma]\big) \geq \delta\Big\}
\]
be the measurable function given by
\[
\mu_\tau \big((\sigma,f_\tau(\sigma)]\big) = \delta\,.
\]
Notice that these functions are well-defined because $\mu^i_K$ is absolutely continuous with respect to the $d$-dimensional Lebesgue measure, and thus for $\alpha$-a.e. $\tau$ the measure $\mu_\tau$ is absolutely continuous with respect to the $1$-dimensional Lebesgue measure. If the density of $\mu^K_i$ is constant, then the functions $f_\tau$ are nothing else than the right translation of a distance $r\delta$; therefore, we can expect that most of the functions $f_\tau$ move points to the right more or less of a distance comparable with $r\delta$. More precisely, we define a large constant $H$ as
\begin{equation}\label{defofH}
H = \frac{K r^d}{\eps m_i}\,;
\end{equation}
notice that $H$ depends on $r$, on $\eps$, on the measures $\gamma_i$ (thus on $\bar\delta$) and on $K$ (so, again on $\eps$ and on $\bar\delta$), but not on $\delta$: in particular, $\delta/H$ is arbitrarily small. We define now
\[
Z = \bigg\{ \tau\in [0,r]^{d-1}:\, \exists\, \sigma,\, f_\tau(\sigma) - \sigma <\frac{r\delta}H \bigg\}\,,
\]
and we claim that
\begin{equation}\label{alphaZsmall}
\alpha(Z) \leq \eps m_i\,.
\end{equation}
Indeed, by the Measurable Selection Theorem we can select a measurable function $\tau\mapsto\sigma(\tau)$, which associates to every $\tau\in Z$ some $\sigma(\tau)$ with the property that $f_\tau(\sigma(\tau))-\sigma(\tau)<r\delta/H$, and we can also define the ``box''
\[
\Gamma = \bigg\{ (\sigma,\tau)\in [0,r]^d:\, \tau\in Z,\, \sigma(\tau)<\sigma< \sigma(\tau) + \frac{r\delta}H \bigg\}\,.
\]
Thus, by Fubini Theorem and recalling the decomposition $\mu^K_i = \mu_\tau\otimes \alpha$, on one side we have that
\[
\mu^K_i(\Gamma) \leq K \L(\Gamma) \leq K\, \frac{r\delta}H \H^{d-1}(Z)\leq K\, \frac{r^d\delta}H\,,
\]
and on the other side that
\[
\mu^K_i(\Gamma) = 
\int_{\tau\in Z} \mu_\tau \bigg(\Big(\sigma(\tau), \sigma(\tau)+\frac{r\delta}H\,\Big)\bigg)\, d\alpha(\tau)
\geq\int_{\tau\in Z} \mu_\tau \Big(\big(\sigma(\tau), f_\tau(\sigma(\tau))\big)\Big)\, d\alpha(\tau)
=\delta \alpha(Z)\,,
\]
thus by the choice~(\ref{defofH}) the estimate~(\ref{alphaZsmall}) follows.\par
We can now go into the definition of the plan $\xi^1_i$; the very rough idea is to ``copy'' the original plan $\gamma_i$, but instead of sending a generic point $x=(\sigma,\tau)$ onto $y$, we send the corresponding point $(f_\tau(\sigma),\tau)$ onto $y$. If $\tau\notin Z$, then we are sure that $f_\tau(\sigma)$ is at least a given bit more on the right, with respect to $\sigma$, and then the distance has been decreased and it is reasonable to hope that the cost has been lowered. Let us now make it formal: we define the sets
\begin{align*}
L=\Big\{ (\sigma,\tau)\in [0,r]^d:\, \tau\notin Z,\,\mu_\tau\big([0,\sigma]\big) \leq 1-\delta\Big\}\,, &&
R=\Big\{ (\sigma,\tau)\in [0,r]^d:\, \tau\notin Z,\,\mu_\tau\big([0,\sigma]\big) \geq \delta\Big\}\,,
\end{align*}
the functions $\tilde g: L\to [0,r]^d$ and $g: L\times \R^d \to \R^d\times \R^d$ as
\begin{align*}
\tilde g(\sigma,\tau) = \big(f_\tau(\sigma),\tau\big)\,, && g(x,y) = \big(\tilde g(x),y\big)\,,
\end{align*}
and then the plan $\xi^1_i$ as
\[
\xi^1_i = g_\# \Big(\gamma^K_i \res L\times \R^d \Big)\,.
\]
It is not difficult to check that the two marginals of $\xi^1_i$ are given by
\begin{align*}
\pi_1 \xi^1_i (A)= \mu^K_i ( A \cap R)\leq \mu_i(A)\,, &&
\pi_2 \xi^1_i (B)= \gamma^K_i ( L\times B)\leq \nu_i(B) \,:
\end{align*}
here and in the following, for simplicity of notations, we denote the two marginals of a generic plan $\zeta$ as $\pi_1 \zeta$ and $\pi_2 \zeta$, instead than ${\pi_1}_\# \zeta$ and ${\pi_2}_\# \zeta$. As a consequence, if we write
\begin{align*}
\mu_i = \pi_1 \xi^1_i+ \mu_{i,{\rm rem}}\,, && \nu_i = \pi_2 \xi^1_i + \nu_{i,{\rm rem}}\,,
\end{align*}
we can evaluate by~(\ref{Kgrande}) and~(\ref{alphaZsmall})
\begin{equation}\label{whatremains}
\|\nu_{i,{\rm rem}}\| = \|\mu_{i,{\rm rem}}\| 
= \|\mu_i - \mu_i^K\| + \mu_i^K(Q^1_i\setminus R)
= \|\mu_i - \mu_i^K\| + \delta \alpha([0,r]^{d-1}\setminus Z) + \alpha(Z)
\leq \big(2\eps +\delta\big) m_i\,.
\end{equation}

\step{IIb}{Estimate on the cost of $\xi^1_i$.}
In this substep, we obtain an estimate on the cost of the plan $\xi^1_i$. Let us take $(z,y)\in\spt\xi^1_i$: this means that $z=\tilde g(x)$ for some $x\in L$, with $(x,y)\in\spt \gamma^K_i$; in particular, $y-x\in S_{\bar\delta}$, and if we write $x=(\sigma,\tau)$, then $z=(f_\tau (\sigma),\tau)$. Hence, we have that $z-x=\big(f_\tau(\sigma)-\sigma\big){\rm e}_1$; let us then call $v=y-x$, $w=a{\rm e}_1$ and $\eta = \big(f_\tau(\sigma)-\sigma\big)/a$, so that $y-z=v - \eta w$. Since $(x,y)\in\spt \gamma^K_i$, we have that $v=y-x\in \BigC\setminus (1-\bar\delta)\BigC$, and by construction we have that $|w-v|\leq r\sqrt d \ll r^+ \sqrt d$. Moreover, $x\in L$, thus
\[
\frac {r\delta}{aH} \leq \eta\leq \frac ra < \frac {2r}A\,.
\]
As a consequence, if
\[
1-\frac{r\delta}{2H} <t<1\,,
\]
then for sure $|1-t|<\eta/2$, hence we can apply Lemma~\ref{stupgeolem} to find that
\[\begin{split}
\big|c_1(x,y)-c_t\big(g(x,y)\big)\big|
&=\big|c_1(x,y)-c_t(z,y)\big|
=\bigg|h(y-x)-h\bigg(\frac{y-z}t\bigg) \bigg|
=\bigg|h(v)-h\bigg(\frac{v-\eta w}t\bigg) \bigg| < \phi(\eta)\\
&< \phi\bigg(\frac {2r}A\bigg)\,.
\end{split}\]
By the definition of $\xi^1_i$, we get then
\begin{equation}\label{cost1}\begin{split}
\limsup_{t\nearrow 1} \C_t(\xi^1_i) 
&=\limsup_{t\nearrow 1} \iint_{\R^d\times\R^d} c_t(z,y)\, d\xi^1_i(z,y)
=\limsup_{t\nearrow 1} \iint_{\R^d\times\R^d} c_t\big(g(x,y)\big)\, d\gamma^K_i(x,y)\\
&\leq \C_1(\gamma^K_i) + \| \gamma^K_i\| \,\phi\bigg(\frac {2r}A\bigg)\,.
\end{split}\end{equation}

\step{III}{Definition of the tranport plans $\xi^2$ and $\xi^3$.}
In the preceding step, we have found a transport plan $\xi^1_i$ which is sending ``almost all'' of $\mu_i$ onto ``almost all'' of $\nu_i$. To complete the construction of our competitor transport plan $\xi$, we have then to fix the remaining parts of the $\mu_i$ and $\nu_i$, as well as to send $\mu_0$ onto $\nu_0$. To do so, we will make use of the Chain Lemma~\ref{chle}. More precisely, for every $i$ we apply the Chain Lemma with constant $M\|\mu_{i,{\rm rem}}\|$: notice that this is possible only if the constant is smaller than $\eps_i$, but in fact by~(\ref{whatremains})
\[
M\|\mu_{i,{\rm rem}}\| \leq M (2\eps+\delta)m_i \leq 3\eps M \ll \bar\eps \leq \bar\eps_i\,.
\]
The Chain Lemma then provides us with measures, which we call for simplicity $M\tilde\gamma_i$ and $M\tilde\gamma_i'$; then, $\tilde\gamma_i=\tilde\gamma_i^A+\tilde\gamma_i^\infty$ with
\begin{align}\label{norms}
M\tilde\gamma_i^A \leq \gamma_i\,, && M \tilde\gamma_i^\infty \leq \gamma-\gamma_i \,, &&
\|\tilde\gamma_i^A\| = \|\mu_{i,{\rm rem}}\|\,, &&
\|\tilde\gamma_i^\infty\| = N_i \|\mu_{i,{\rm rem}}\|\leq N \|\mu_{i,{\rm rem}}\|\,.
\end{align}
Notice that we have the inequality $\tilde\gamma_i^\infty\leq \gamma-\gamma_i$, but this does not mean $\tilde\gamma_i^\infty\leq\gamma_0$, since $\tilde\gamma_i^\infty$ might have parts in common with $\gamma_j$ for some $j\neq i$. We can then further subdivide $\tilde\gamma_i^\infty= \tilde\gamma_i^{\rm OUT}+\tilde\gamma_i^{\rm NO}$, with
\begin{align}\label{projOUT}
M\tilde\gamma_i^{\rm OUT}\leq \gamma_0\,, && M \tilde\gamma_i^{\rm NO} \leq \gamma-\gamma_0-\gamma_i\,.
\end{align}
The marginals of these measures are
\begin{align*}
\pi_1 \tilde\gamma_i^A = \mu_i^A\leq \mu_i\,, &&
M\pi_1 \tilde\gamma_i^{\rm OUT} \leq \mu_0\,, &&
\pi_1 \tilde\gamma_i^{\rm NO} \leq \mu-\mu_0-\mu_i\,, \\
\pi_2 \tilde\gamma_i^A = \nu_i^A\leq \nu_i\,, &&
M\pi_2 \tilde\gamma_i^{\rm OUT} \leq \nu_0\,, &&
\pi_2 \tilde\gamma_i^{\rm NO} \leq \nu-\nu_0-\nu_i\,.
\end{align*}
Instead, the marginals of $\tilde\gamma_i'$ are given by
\begin{align}\label{projtilgam'}
\pi_1 \tilde\gamma_i' = \pi_1 \tilde\gamma_i = \mu_i^A + \pi_1(\tilde\gamma_i^{\rm OUT} + \tilde\gamma_i^{\rm NO})\,, &&
\pi_2 \tilde\gamma_i' = \nu_i^B + \pi_2(\tilde\gamma_i^{\rm OUT} + \tilde\gamma_i^{\rm NO})\,,
\end{align}
where $\nu_i^B$ does not necessarily coincide with $\nu_i^A$, but they are both measures of norm $\|\mu_{i,{\rm rem}}\|$ smaller than $\nu_i$. We are then ready to define the plan $\xi^2$ as
\[
\xi^2 = \gamma_0 - \gamma_0^{\rm OUT}\,, \qquad \hbox{where} \qquad
\gamma_0^{\rm OUT}:= \sum_{i=1}^M \tilde\gamma_i^{\rm OUT}\,.
\]
Notice that $\xi^2$ is a positive measure because, according to~(\ref{projOUT}), we do not have just $\tilde\gamma_i^{\rm OUT}\leq \gamma_0$, but also $\tilde\gamma_i^{\rm OUT}\leq \gamma_0/M$: in fact, the reason why we have applied the Chain Lemma with constants $M\|\mu_{i,{\rm rem}}\|$ instead of just $\|\mu_{i,{\rm rem}}\|$ was precisely to be sure to get, at this point, a \emph{positive} measure $\xi^2$. We also call
\begin{align*}
\mu_0^{\rm OUT} = \pi_1 \gamma_0^{\rm OUT} \leq \mu_0 \,, &&
\nu_0^{\rm OUT} = \pi_2 \gamma_0^{\rm OUT} \leq \nu_0 \,,
\end{align*}
so that the marginals of $\xi^2$ are
\begin{align}\label{projxi2}
\pi_1 \xi^2 = \mu_0-\mu_0^{\rm OUT}\,, &&
\pi_2 \xi^2 = \nu_0-\nu_0^{\rm OUT}\,.
\end{align}
Let us now set $\gamma^{\rm NO} = \sum_{i=1}^M \tilde\gamma_i^{\rm NO}$; since again by~(\ref{projOUT}) $\gamma^{\rm NO}\leq \gamma-\gamma_0$, we can decompose it as
\[
\gamma^{\rm NO}=\sum_{i=1}^M \gamma^{\rm NO}_i\,, \qquad\hbox{with}\qquad \gamma^{\rm NO}_i\leq \gamma_i\,.
\]
Notice that $\gamma^{\rm NO}_i$ does not coincide with $\tilde\gamma_i^{\rm NO}$; on the contrary, $\gamma^{\rm NO}_i$ comes from the measures $\tilde\gamma_j^{\rm NO}$ for all $j\neq i$. Let us finally call $\mu_i^{\rm NO}$ and $\nu_i^{\rm NO}$ the two marginals of $\gamma_i^{\rm NO}$, and notice that by~(\ref{norms}) and~(\ref{whatremains})
\[
\|\mu_i^{\rm NO}\|= \|\gamma_i^{\rm NO}\| \leq \|\gamma^{\rm NO}\|
= \sum_{i=1}^M \|\tilde\gamma_i^{\rm NO}\|
\leq \sum_{i=1}^M \|\tilde\gamma_i^\infty\|
\leq \sum_{i=1}^M N \| \mu_{i,{\rm rem}}\|
\leq N \sum_{i=1}^M 3\eps m_i
\leq 3\eps N\ll m_i\,,
\]
while
\[
\|\xi^1_i\|= \|\mu_i - \mu_{i,{\rm rem}}\|  \geq (1-3\eps) m_i\,.
\]
As a consequence, all the constants $\lambda_i$ given by
\[
\lambda_i = 1 - \frac{\|\mu_i^{\rm NO}\|}{\|\xi^1_i\|}
\]
are only slightly smaller than $1$; define then
\begin{align*}
\mu_{i,{\rm rem}}^+ = \mu_i - \pi_1 \big( \lambda_i \xi^1_i\big)\geq \mu_{i,{\rm rem}}\,, &&
\nu_{i,{\rm rem}}^+ = \nu_i - \pi_2 \big( \lambda_i \xi^1_i\big)\geq \nu_{i,{\rm rem}}\,,
\end{align*}
and observe that, by the definition of $\lambda_i$, we have
\begin{equation}\label{estiafterlambda}
\|\mu_{i,{\rm rem}}^+\| =\|\nu_{i,{\rm rem}}^+\|
=\|\mu_{i,{\rm rem}}\|+(1-\lambda_i) \|\xi^1_i\|
=\|\mu_{i,{\rm rem}}\|+\|\mu_i^{\rm NO}\|
=\|\mu_i^A\|+\|\mu_i^{\rm NO}\|\,.
\end{equation}
Finally, we can set
\[
\xi^1 := \sum_{i=1}^M \lambda_i \xi^1_i\,,
\]
whose marginals are
\begin{align}\label{projxi1}
\pi_1 \xi^1 = \sum_{i=1}^M \mu_i - \mu_{i,{\rm rem}}^+\,, &&
\pi_2 \xi^1 = \sum_{i=1}^M \nu_i - \nu_{i,{\rm rem}}^+\,.
\end{align}
We aim to define our competitor plan as $\xi=\xi^1+\xi^2+\xi^3$: by~(\ref{projxi2}) and~(\ref{projxi1}), the marginals of $\xi^3$ must satisfy
\begin{align}\label{requixi3}
\pi_1 \xi^3 = \mu_0^{\rm OUT}+\sum_{i=1}^M \mu_{i,{\rm rem}}^+\,, &&
\pi_2 \xi^3 =\nu_0^{\rm OUT}+\sum_{i=1}^M \nu_{i,{\rm rem}}^+\,.
\end{align}
Let us consider the measure $\xi^3_{\rm TEMP}=\sum_{i=1}^M \tilde\gamma'_i$: keeping in mind~(\ref{projtilgam'}), we have
\begin{align*}
&\pi_1 \xi^3_{\rm TEMP} = \sum_{i=1}^M \mu_i^A + \pi_1(\tilde\gamma_i^{\rm OUT} + \tilde\gamma_i^{\rm NO})
= \mu_0^{\rm OUT} + \sum_{i=1}^M \mu_i^A + \mu_i^{\rm NO}\,, \\
&\pi_2 \xi^3_{\rm TEMP} = \sum_{i=1}^M \nu_i^B + \pi_2(\tilde\gamma_i^{\rm OUT} + \tilde\gamma_i^{\rm NO})
= \nu_0^{\rm OUT} + \sum_{i=1}^M \nu_i^B + \nu_i^{\rm NO}\,. 
\end{align*}
Notice that the marginals of $\xi^3_{\rm TEMP}$ are almost exactly those required for $\xi^3$ in~(\ref{requixi3}), the only ``mistake'' being that for each $1\leq i \leq M$ in place of the measures $\mu_{i,{\rm rem}}^+\leq \mu_i$ and $\nu_{i,{\rm rem}}^+\leq \nu_i$ one has the measures $\mu_i^A+\mu_i^{\rm NO}\leq \mu_i$ and $\nu_i^B+\nu_i^{\rm NO}\leq \nu_i$, which have anyway the same mass thanks to the estimate~(\ref{estiafterlambda}). It is then easy to adjust the measure $\xi^3_{\rm TEMP}$: we define the two auxiliary transport plans
\begin{align*}
\beta_1 = ({\rm Id},{\rm Id})_\# \mu_0^{\rm OUT} + \sum_{i=1}^M \mu_{i,{\rm rem}}^+ \otimes \big( \mu_i^A+\mu_i^{\rm NO} \big)\,, &&
\beta_2 = ({\rm Id},{\rm Id})_\# \nu_0^{\rm OUT} + \sum_{i=1}^M  \big( \nu_i^B+\nu_i^{\rm NO} \big)\otimes\nu_{i,{\rm rem}}^+ \,.
\end{align*}
Notice that $\beta_1$ has first marginal $\mu_0^{\rm OUT} + \sum_i \mu_{i,{\rm rem}}^+$ and second marginal $\mu_0^{\rm OUT}+\sum_i \big( \mu_i^A+\mu_i^{\rm NO} \big)$, while $\beta_2$ has first marginal $\nu_0^{\rm OUT}+\sum_i \big( \nu_i^B+\nu_i^{\rm NO} \big)$ and second marginal $\nu_0^{\rm OUT} + \sum_i \nu_{i,{\rm rem}}^+$. Therefore, if we finally define the composition $\xi^3 = \beta_2 \circ \xi^3_{\rm TEMP}\circ \beta_1$ in the sense of Definition~\ref{compos}, then $\xi_3$ is a positive measure whose marginals satisfy~(\ref{requixi3}), thus the plan $\xi=\xi^1+\xi^2+\xi^3$ is an admissible transport plan.

\step{IV}{Estimate on the cost of the transport plan $\xi$.}
In this last step we want to estimate the cost of the transport plan $\xi$: in particular, we will establish~(\ref{toprove}), so concluding the proof. By linearity of the cost, we have of course $\C_t(\xi)=\C_t(\xi^1)+\C_t(\xi^2)+\C_t(\xi^3)$, hence we will consider the three terms separately. Concerning $\xi^1$, it is enough to recall~(\ref{cost1}) and the fact that the constants $\lambda_i$ from Step~III are smaller than $1$, so to get
\begin{equation}\label{estixi1}\begin{split}
\limsup_{t\nearrow 1}\C_t(\xi^1) 
&=\limsup_{t\nearrow 1} \sum_{i=1}^M \lambda_i\C_t( \xi^1_i) 
\leq \sum_{i=1}^M \limsup_{t\nearrow 1} \C_t(\xi^1_i)
\leq \sum_{i=1}^M \C_1(\gamma^K_i) + \| \gamma^K_i\| \,\phi\bigg(\frac {2r}A\bigg)\\
&\leq \C_1(\gamma-\gamma_0) + \phi\bigg(\frac {2r}A\bigg)\,.
\end{split}\end{equation}
Let us now consider $\xi^2$: since $\xi^2\leq \gamma_0$, then of course $\C_t(\xi^2)\leq \C_t(\gamma_0)$. On the other hand, the transport plan $\gamma_0$ is concentrated by definition in $S_{\bar\delta}$; that is, for $\gamma_0$-a.e. $(x,y)$, one has $y-x\in (1-\bar\delta)\BigC$. Since the function $h$ is strictly convex in the whole $\BigC$, it is uniformly Lipschitz in a neighborhood of $(1-\bar\delta)\BigC$, which implies
\[
\sup_{(x,y)\in S_{\bar\delta}} |c_t(x,y) - c_1(x,y)| \freccia{t\to 1} 0 \,.
\]
Consequently, we can simply estimate
\begin{equation}\label{estixi2}
\limsup_{t\nearrow 1}\C_t(\xi^2) \leq \limsup_{t\nearrow 1}\C_t(\gamma_0) \leq \C_1(\gamma_0)\,.
\end{equation}
Finally, let us pass to consider $\xi^3$, which was defined as $\xi^3= \beta_2\circ \xi^3_{\rm TEMP}\circ \beta_1$. Keep in mind that $\xi^3_{\rm TEMP}\leq \gamma'$ by definition, hence a pair $(x,y)\in\spt(\xi^3_{\rm TEMP})$ must satisfy $y-x \in t' \,\BigC\subseteq (1-\bar\delta)\BigC$. Moreover, the plans $\beta_1$ and $\beta_2$ are only moving points inside given squares; more precisely, if $(\tilde x, x)\in\spt\beta_1$ then necessarily $|x-\tilde x| \leq r \sqrt d$, and similarly if $(y,\tilde y)\in\spt\beta_2$ then $|\tilde y -y|\leq r \sqrt d$. As a consequence, keeping in mind~(\ref{rbdpic}) we have that for every $(\tilde x,\tilde y)\in\spt\xi^3$ it is $\tilde y-\tilde x \in (1-\frac {\bar\delta} 2)\,\C$, and then $c_t(\tilde x,\tilde y)< +\infty$ for every $t$ close enough to $1$. Summarizing, for any such $t$ we have, also recalling~(\ref{whatremains}) and~(\ref{defeps}),
\[
\C_t(\xi^3) \leq \|h \|_{L^\infty} \|\xi^3\|
= \|h \|_{L^\infty} \|\xi^3_{\rm TEMP}\| 
\leq (N+1) \sum_{i=1}^M \|\mu_{i,{\rm rem}}\|
\leq 3 \eps (N+1) \sum_{i=1}^M m_i \leq 3 \eps (N+1) \leq \phi\bigg(\frac{2r}A\bigg)\,.
\]
Putting this last estimate together with~(\ref{estixi1}) and~(\ref{estixi2}), we have finally established~(\ref{toprove}), and the proof is complete.
\end{proof}

\section{Directions in the boundary of $\BigC$ with infinite slope are not used by optimal plans\label{seclocat}}

In this section we prove the claim~(ii) of Theorem~\mref{Main}, which says that, for every supercritical time $t>T$, the optimal transport plan $\gamma_t$ does not use vectors in the boundary of $\BigC$ at which the directional derivative of $h$ is $-\infty$. Since this happens at all the vectors in the boundary of $\BigC$ when the cost is highly relativistic (this is indeed the definition of highly relativistic costs), then this claim generalizes the analogous one in Theorem~\ref{Thbpp}. The construction needed to prove this part is very similar to the one that we performed to prove the first part of Theorem~\mref{Main}; in fact, the situation this time is much simpler, because we need to use only a single square in $\R^d\times \R^d$.

\proofof{Theorem~\mref{Main}, part~(ii)}
Up to rescaling, we can assume for simplicity that $T<t=1$. Exactly as in the proof of part~(i), let us arbitrarily fix some $T<t'<1$ and call $\gamma$ and $\gamma'$ the optimal transport plans corresponding to the times $t=1$ and $t=t'$. Recall that we have to show that $\gamma$ does not charge the pairs $(x,y)$ with $y-x\in\Theta$, where the set $\Theta$ is defined as
\[
\Theta = \Big\{ v \in \partial\BigC:\, D_{-v} h(v) =-\infty\Big\}\,.
\]
Assume, instead, that the measure $\gamma\res \{ (x,y):\, y-x\in\Theta\}$ is non trivial, and let $(\bar x,\bar y)$ belong to its support. Up to a rotation and a rescaling, we can assume that $\bar y-\bar x={\rm e}_1$. Let us now fix a small constant $r \ll 1-t'$, call $Q_1$ and $Q_2$ the two squares centered at $\bar x$ and $\bar y$ with sides parallel to the coordinate axes and of length $r$, and define
\[
\gamma_0 = \gamma \res \Big\{ (x,y) \in Q_1\times Q_2:\, y-x \in \Theta\Big\}\,.
\]
Notice that $\gamma_0$ is not the trivial measure by construction; call also $\mu_0$ and $\nu_0$ the marginals of $\gamma_0$; up to a last translation, we assume for simplicity that $Q_1=[0,r]^d$ and $Q_2=Q_1 + {\rm e}_1$. Let us apply the Chain Lemma~\ref{chle} to the measures $\gamma,\, \gamma'$ and $\gamma_0$, finding the constants $\bar\eps>0$ and $N\in\N$. Now, we select some positive constant $K$ such that
\[
\gamma_0\big( \big\{ (x,y)\in Q_1\times Q_2:\, \rho(x) <K \big\} \big) >0\,,
\]
calling again $\rho$ the density of $\mu$ with respect to the Lebesgue measure. Let us call $\gamma^K_{\rm TEMP}$ the restriction of $\gamma_0$ to the points $(x,y)\in Q_1\times Q_2$ with $\rho(x)<K$, let $m_{\rm TEMP}\ll \bar\eps$ be its mass, and let $\mu^K_{\rm TEMP}$ and $\nu^K_{\rm TEMP}$ be its two marginals. Now, we can repeat verbatim the construction of Step~II in the proof of part~(i) of Theorem~\mref{Main} (with $\eps=1/2$, which is enough for this proof), disintegrating $\mu^K_{\rm TEMP} = \mu_\tau \otimes \alpha$, and defining first the constant $\delta\ll 1$ (to be sent to $0$ at the end) and the function $f_\tau$ for $\tau\in [0,r]^{d-1}$, then the constant $H=2Kr^d/m_{\rm TEMP}$ and the set $Z$ with $\alpha(Z)\leq m_{\rm TEMP}/2$, and finally the sets $L$ and $R$ and the functions $\tilde g$ and $g$. We now call
\[
\gamma^K = \gamma^K_{\rm TEMP} \res \Big\{(x,y)\in Q_1 \times Q_2:\, x=(\sigma,\tau),\, \tau\notin Z \Big\}\,,
\]
and let as usual $\mu^K$ and $\nu^K$ be its marginals, and $m\in [m_{\rm TEMP}/2,m_{\rm TEMP}]$ its total mass. Define now
\[
\xi^1 = g_\# \gamma^K \res \big(L\times Q_2\big) \Big)\,,
\]
and notice carefully that its two marginals are given by
\begin{align*}
\pi_1 \xi^1 (A)= \mu^K_{\rm TEMP} ( A \cap R)\leq \mu^K(A)\,, &&
\pi_2 \xi^1 (B)= \gamma^K_{\rm TEMP} ( L\times B)\leq \nu^K(B) \,;
\end{align*}
as a consequence, we can define the ``remaining measures''
\begin{align*}
\mu_{\rm rem} = \mu^K - \pi_1 \xi^1\,, &&
\nu_{\rm rem} = \nu^K - \pi_2 \xi^1\,,
\end{align*}
whose measure is $\eta := \| \mu_{\rm rem}\| = \| \nu_{\rm rem}\|=\delta m$ by construction. Since $\delta$ can be taken arbitrarily small, we can assume that $\eta < \bar\eps$, so that the Chain Lemma~\ref{chle} provides us with measures $\tilde\gamma=\tilde\gamma_0+\tilde\gamma_\infty$, $\tilde\gamma'$, $\tilde\mu,\, \tilde\nu,\, \mu_A,\, \nu_A$ and $\nu_B$ with $\|\tilde\gamma_0\|=\eta$ and $\|\tilde\gamma_\infty\|=N\eta$ satisfying
\begin{align*}
\mu_A &\leq \mu_0\,, &
\nu_A,\, \nu_B &\leq \nu_0\,, &
\pi_1 \tilde\gamma_0 &= \mu_A \,, &
\pi_2 \tilde\gamma_0 &= \nu_A \,, \\
\pi_1 \tilde\gamma_\infty&= \tilde\mu\,, &
\pi_2 \tilde\gamma_\infty&= \tilde\nu\,, &
\pi_1 \tilde\gamma' &= \tilde\mu + \mu_A\,, &
\pi_2 \tilde\gamma' &= \tilde\nu + \nu_B\,.
\end{align*}
We can then immediately define $\xi^2 = \gamma-\gamma^K - \tilde\gamma_\infty$, and observe that the two marginals of $\xi^1+\xi^2$ are
\begin{align*}
\pi_1 \big(\xi^1+\xi^2\big) = \mu - \mu_{\rm rem} - \tilde\mu\,, &&
\pi_2 \big(\xi^1+\xi^2\big) = \nu - \nu_{\rm rem} - \tilde\nu\,.
\end{align*}
As a consequence, in order to get a competitor transport plan $\xi=\xi^1+\xi^2+\xi^3$, we need a plan $\xi^3$ with marginals $\tilde\mu+\mu_{\rm rem}$ and $\tilde\nu+\nu_{\rm rem}$, so exactly as in the proof of last section we simply define $\xi^3 = \beta_2\circ \tilde\gamma' \circ \beta_1$, being the auxiliary transport plans $\beta_1$ and $\beta_2$ defined as
\begin{align*}
\beta_1 = ({\rm Id},{\rm Id})_\# \tilde\mu+ \mu_{\rm rem} \otimes \mu_A\,, &&
\beta_2 = ({\rm Id},{\rm Id})_\# \tilde\nu+ \nu_B\otimes\nu_{\rm rem} \,.
\end{align*}
Estimating the cost of $\xi^3$ is very simple: as in last section, we only have to observe that every pair $(x,y)$ in the support of $\tilde\gamma'$ satisfies $(y-x) \in t'\BigC$, while the auxiliary plans $\beta_1$ and $\beta_2$ only move points at most of a distance at most $2r\ll 1-t'$, so we get that for all the pairs $(x,y)\in\spt\xi^3$ it is $y-x \in \BigC$. Thus, since $\gamma$ is an optimal transport plan for time $t=1$, we can estimate
\[\begin{split}
\C_1(\gamma) &\leq \C_1(\xi) 
=\C_1(\xi^1) + \C_1(\xi^2)+ \C_1(\xi^3)
\leq \C_1(\xi^1) + \C_1(\gamma-\gamma^K)+ \|h\|_{L^\infty}\|\xi^3\|\\
&= \C_1(\xi^1) + \C_1(\gamma-\gamma^K)+ \|h\|_{L^\infty} (N+1)\,\delta m\,,
\end{split}\]
which implies
\begin{equation}\label{for1}
\C_1\Big(\gamma^K \res \big(L\times Q_2\big)\Big) - \C_1\Big(g_\# \Big(\gamma^K \res \big(L\times Q_2\big) \Big)\Big) \leq \|h\|_{L^\infty} (N+1)\,\delta m\,,
\end{equation}
Let us now estimate the left term in the last inequality as
\begin{equation}\label{for2}\begin{split}
\C_1\Big(\gamma^K &\res \big(L\times Q_2\big)\Big) - \C_1\Big(g_\# \Big(\gamma^K \res \big(L\times Q_2\big) \Big)\Big)
=\iint_{L\times Q_2} c(x,y)-c(\tilde g(x),y) \, d\gamma^K(x,y)\\
&=\iint\limits_{L\times Q_2} h(y-x)-h(y-\tilde g(x)) \, d\gamma^K(x,y)
=\iint\limits_{Q_1\times Q_2} \Chi{L}(x)\big(h(y-x)-h(y-\tilde g(x))\big) \, d\gamma^K(x,y)\,,
\end{split}\end{equation}
and notice that for $\gamma^K$-a.e. $(x,y)$ we have $y-\tilde g(x) = y - x - \varphi(x) {\rm e}_1$, with
\[
\frac{m\delta}{2Kr^{d-1}} \leq \varphi(x) \leq r\,.
\]
Observe now that $y-x$ is in a very small neighborhood of ${\rm e}_1$, hence by convexity of $h$, recalling also that $h(0)=0 < h({\rm e}_1)$, we immediately get that $h$ strictly decreases in the direction $-{\rm e}_1$ in a small neighborhood of ${\rm e}_1$. Thus, we can evaluate
\[
h(y-\tilde g(x)) \leq h(y-x - c \delta{\rm e}_1)\,,
\]
setting $c=m/(2Kr^{d-1})$. Putting this estimate together with~(\ref{for1}) and~(\ref{for2}), we obtain
\[
\iint_{Q_1\times Q_2} \Chi{L}(x)\,\frac{h(y-x) - h(y-x-c\delta {\rm e}_1)}\delta \, d\gamma^K(x,y)\leq \|h\|_{L^\infty} (N+1)\, m\,.
\]
Now, keep in mind that the set $L$ actually depends on the choice of $\delta$; nevertheless, it is obvious from the definition that the set $L$ increases when $\delta$ decreases, and the union of all the sets $L$ for $\delta\to 0$ covers $\gamma^K$ all of $Q_1$. As a consequence, for $\gamma^K$-a.e. $(x,y)$ the function
\[
\delta \mapsto \Chi{L}(x)\,  \frac{h(y-x) - h(y-x-c\delta {\rm e}_1)}\delta
\]
is increasing for $\delta\searrow 0$, and it converges to $-c D_{-{\rm e}_1}h(y-x)$. The Monotone Convergence Theorem gives then
\[
\iint_{Q_1\times Q_2} -D_{-{\rm e}_1} h(y-x)\, d\gamma^K(x,y)\leq \|h\|_{L^\infty} (N+1)\, 2 K r^{d-1} \,.
\]
And finally, $\gamma^K$ is concentrated on pairs $(x,y)$ with $y-x\in \Theta$, so with $D_{-(y-x)}h(y-x)=-\infty$, thus Lemma~\ref{alwrealwin} implies $-D_{-{\rm e}_1} h(y-x) = +\infty$ for $\gamma^K$-a.e. $(x,y)$, and this gives the searched contradiction, so concluding the proof.
\end{proof}

\thanks{{\bf Acknowledgements.} This work has been mostly done while the first author was post-doctoral fellow at the Friederich-Alexander Universit\"at Erlangen-N\"urnberg during the academic year 2014-15, funded by the ERC grant 258685 ``AnOptSetCon''.}


\begin{thebibliography}{9}
\bibitem{AFP} L. Ambrosio, N. Fusco, D. Pallara, Functions of Bounded Variation and Free Discontinuity Problems, Oxford University Press (2000).
\bibitem{bpp} J. Bertrand, A. Pratelli, M. Puel, Kantorovich potentials and continuity of total cost for relativistic cost functions, preprint (2015).
\bibitem{bp} J. Bertrand, M. Puel, The optimal mass transport problem for relativistic costs, Calc. Var. PDEs {\bf 46} (2013), no. 1--2, 353--374.
\bibitem {Br} Y. Brenier, Extended Monge-Kantorovich theory, in ``Optimal Transportation and Applications'', Lecture Notes in Mathematics, LNM {\bf 1813}, Springer (2003), 91--121.
\bibitem{McCaPu} R. McCann, M. Puel, Constructing a relativistic heat flow by transport time steps, Ann. Inst. H. Poincar\'e Anal. Non Lin\'eaire {\bf 26} (2009), no. 6, 2539--2580.
\bibitem{s} F. Santambrogio, Optimal Transport for Applied Mathematicians, Birkh\"auser, (2015).
\bibitem{vil} C. Villani, Optimal transport: Old and New, Springer-Verlag, Berlin (2009).
\end{thebibliography}
\end{document}